\pdfoutput=1
\RequirePackage{ifpdf}
\ifpdf 
\documentclass[pdftex]{sigma}
\else
\documentclass{sigma}
\fi

\numberwithin{equation}{section}

\newtheorem{Theorem}{Theorem}[section]
\newtheorem{Corollary}[Theorem]{Corollary}
\newtheorem{Lemma}[Theorem]{Lemma}
\newtheorem{Proposition}[Theorem]{Proposition}
\newtheorem{prop}[Theorem]{Result}
 { \theoremstyle{definition}
\newtheorem{Definition}[Theorem]{Definition}

\newtheorem{Remark}[Theorem]{Remark} }

\newcommand{\Cc}{\mathcal{C}}

\newcommand{\hmod}{{_H\mathcal{M}}}

\newcommand{\vect}{\text{Vec}}

\newcommand{\ot}{\otimes}

\newcommand{\zp}{\mathbb{Z}/p}
\begin{document}

\newcommand{\arXivNumber}{2010.02768}

\renewcommand{\PaperNumber}{026}

\FirstPageHeading

\ShortArticleName{Mixed vs Stable Anti-Yetter--Drinfeld Contramodules}

\ArticleName{Mixed vs Stable Anti-Yetter--Drinfeld Contramodules}

\Author{Ilya SHAPIRO}

\AuthorNameForHeading{I.~Shapiro}

\Address{Department of Mathematics and Statistics, University of Windsor,\\ 401 Sunset Avenue, Windsor, Ontario N9B 3P4, Canada}
\Email{\href{mailto:ishapiro@uwindsor.ca}{ishapiro@uwindsor.ca}}
\URLaddress{\url{http://http://web2.uwindsor.ca/math/ishapiro/}}

\ArticleDates{Received November 09, 2020, in final form March 04, 2021; Published online March 17, 2021}

\Abstract{We examine the cyclic homology of the monoidal category of modules over a finite dimensional Hopf algebra, motivated by the need to demonstrate that there is a difference between the recently introduced mixed anti-Yetter--Drinfeld contramodules and the usual stable anti-Yetter--Drinfeld contramodules. Namely, we show that Sweedler's Hopf algebra provides an example where mixed complexes in the category of stable anti-Yetter--Drinfeld contramodules (previously studied) are not equivalent, as differential graded categories to~the category of mixed anti-Yetter--Drinfeld contramodules (recently introduced).}

\Keywords{Hopf algebras; homological algebra; Taft algebras}

\Classification{16E35; 16T05; 18G90; 19D55}

\section{Introduction}

Cyclic (co)homology was introduced independently by Boris Tsygan and Alain Connes in the 1980s. It has since been generalized, applied to many fields, and now reaches into many different settings. Our investigations in this paper focus on the equivariant flavour that began with Connes--Moscovici~\cite{conmos} and was generalized into Hopf-cyclic cohomology by Hajac--Khalkhali--Rangipour--Sommerh\"{a}user~\cite{HKRS1, HKRS2} and Jara--Stefan~\cite{JS} (independently). Roughly speaking, the original theory defines cohomology groups for an associative algebra that play the role of the de~Rham cohomology in the noncommutative setting. The equivariant version considers an~alge\-bra with an action of a Hopf alge\-bra. It turns out that just as in the de~Rham cohomology, one has coefficients in the Hopf setting; it is an interesting fact that, unlike the de~Rham setting, Hopf-cyclic cohomology requires coefficients, i.e., there are no canonical trivial coefficients. These coefficients are known as stable anti-Yetter--Drinfeld modules, due to their similarity to the usual Yetter--Drinfeld modules. It turns out that the more natural, from a conceptual point of view, version of coefficients are stable anti-Yetter--Drinfeld contramodules~\cite{contra}. It is the desire to~under\-stand the coefficients themselves that motivated a series of papers by the author of the present one. This paper is a natural next step.

This paper is a descendant of~\cite{chern}, where it is shown that the classic stable anti-Yetter--Drinfeld contramodules are simply objects in the naive cyclic homology category of $\hmod$, the monoidal category of modules over the Hopf algebra $H$. It is furthermore conjectured there, that the new coefficients introduced (mixed anti-Yetter--Drinfeld contramodules) are obtained via the true cyclic homology category; this makes exact the analogy between the de~Rham coefficients in the geometric setting and the Hopf-cyclic coefficients. Namely, while the latter are obtained from the cyclic homology of $\hmod$, the former are shown in~\cite{it} to arise from the cyclic homology of quasi-coherent sheaves on the space $X$. More precisely, in~\cite{chern}, a category of mixed anti-Yetter--Drinfeld contramodules is defined by analogy with the derived algebraic geometry case of~\cite{it}. This new generalization is conceptual, and furthermore allows the expression of~the Hopf-cyclic cohomology of an algebra $A$ with coefficients in $M$ as an ${\rm Ext}$ (in this category) between $\mathop{\rm ch}(A)$, the Chern character object associated to $A$, and $M$ itself. Even if one takes $M$ to be a~stable anti-Yetter--Drinfeld contramodule, the object $\mathop{\rm ch}(A)$ is truly a mixed anti-Yetter--Drinfeld contramodule. It is conjectured that mixed anti-Yetter--Drinfeld contramodules are the objects in the cyclic homology category of $\hmod$.

The comparison in~\cite{chern} between anti-Yetter--Drinfeld contramodules and the cyclic homology category of $\hmod$ involves a monad on $\hmod$ with a central element $\sigma$. When we talk about the $S^1$-action we mean the action of this central element on the category of modules over the monad. It is this description that allows us here to reduce the investigations into the differences between the previously studied and the new Hopf-cyclic cohomology to the analysis of categories of modules over two differential graded algebras (DGAs). Namely, in the notation of the paper, we have an algebra $\widehat{D}(H)$ whose modules are the anti-Yetter--Drinfeld contramodules, we have a DGA $\widehat{D}(H)[\theta]$ with $d\theta=\sigma-1$ that yields the new mixed anti-Yetter--Drinfeld contramodules, and we have a DGA $\widehat{D}(H)/(\sigma-1)[\theta]$ with $d\theta=0$ that yields the previously studied setting, i.e., the mixed complexes in stable anti-Yetter--Drinfeld contramodules. Thus, it suffices for our purposes to compare the DG categories of modules over these two DGAs. We~concentrate on~finite dimensional Hopf algebras $H$. We~show that if the square of the antipode is trivial, i.e., $S^2={\rm Id}$ then the DG categories coincide (Proposition~\ref{sscase}):
\begin{prop}
Let $H$ be a finite dimensional Hopf algebra such that the square of the antipode is equal to the identity, i.e., $S^2={\rm Id}$. Then the categories of mixed complexes in stable $aYD$-contramodules and mixed $aYD$-contramodules are $DG$-equivalent.
\end{prop}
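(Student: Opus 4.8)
The plan is to realize the comparison as restriction of scalars along the evident morphism of DGAs
\[
\rho\colon\ \bigl(\widehat{D}(H)[\theta],\ d\theta=\sigma-1\bigr)\longrightarrow\bigl(\widehat{D}(H)/(\sigma-1)[\theta],\ d\theta=0\bigr),\qquad \theta\mapsto\theta,
\]
which is well defined precisely because $\sigma-1$ maps to $0$ (here $\theta$ is the odd square-zero generator). Restriction $\rho^{*}$ carries mixed complexes in stable $aYD$-contramodules to mixed $aYD$-contramodules, and the claim is that it is a $DG$-equivalence. The first step is to check that $\rho^{*}$ is fully faithful at the chain level, with \emph{no} hypothesis on $H$: if $M,N$ are $DG$-modules over $\widehat{D}(H)/(\sigma-1)[\theta]$ then $\sigma$ acts as the identity on each, so a graded map between them is $\widehat{D}(H)$-linear if and only if it is $\widehat{D}(H)/(\sigma-1)$-linear, while the $\theta$-actions and internal differentials are untouched; hence $\rho^{*}$ induces isomorphisms of $\mathrm{Hom}$-complexes.

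The hypothesis $S^{2}=\mathrm{Id}$ is needed only for essential surjectivity, and this is where the single genuine computation lies. Using the explicit description of the central element $\sigma\in\widehat{D}(H)$ from~\cite{chern}, I would show that $S^{2}=\mathrm{Id}$ forces $\sigma$ to be a \emph{semisimple} element of the finite-dimensional algebra $\widehat{D}(H)$ — equivalently, since $\sigma$ is in any case invertible with roots-of-unity spectrum, an element of finite multiplicative order invertible in the ground field. (This is exactly the feature that fails for Sweedler's $H_{4}$, where $\sigma$ retains a nontrivial unipotent part arising from the nilpotents of $H_{4}$; that is the source of the inequivalence established later in the paper.) Granting this, $k[\sigma]\subseteq Z(\widehat{D}(H))$ is an \'etale $k$-algebra, hence a product of fields one of which is the image of $k[x]/(x-1)$; equivalently there is a central idempotent $e\in\widehat{D}(H)$ with $e\widehat{D}(H)=\widehat{D}(H)/(\sigma-1)$ and with $\sigma-1$ invertible in $(1-e)\widehat{D}(H)$.

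Finally I would combine the two observations. The idempotent $e$ splits the mixed DGA as a product $\bigl(e\widehat{D}(H)[\theta],\,d\theta=0\bigr)\times\bigl((1-e)\widehat{D}(H)[\theta],\,d\theta=(1-e)(\sigma-1)\bigr)$, so any $DG$-module $P$ over it decomposes as $P=eP\oplus(1-e)P$. On the second summand $v:=(1-e)(\sigma-1)$ is an invertible even central element with $d\theta=v$, so $v^{-1}\theta$ is a contracting homotopy for every $DG$-module there, by the graded Leibniz identity $d\bigl(v^{-1}\theta\cdot n\bigr)+v^{-1}\theta\cdot dn=n$; thus $(1-e)P$ is contractible and $P\to eP$ is a homotopy equivalence. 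Since $eP$ is a $DG$-module over $e\widehat{D}(H)[\theta]=\widehat{D}(H)/(\sigma-1)[\theta]$ with $d\theta=0$, we get $P\simeq eP=\rho^{*}(eP)$, so $\rho^{*}$ is essentially surjective and hence a $DG$-equivalence. The main obstacle is the structural input on $\sigma$ in the second paragraph; if the categories of (stable) mixed $aYD$-contramodules are carved out inside all $DG$-modules by a contramodule/completeness condition, one also checks — routinely, since $e$, the homotopy $v^{-1}\theta$, and the splitting are all built from central elements of $\widehat{D}(H)$ — that this condition is preserved.
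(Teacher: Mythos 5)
Your third and fourth paragraphs are sound and in fact reproduce the mechanism of the paper's Lemma~\ref{diag}: once $\sigma-1$ acts diagonalizably on $\widehat{D}(H)$, the central idempotent $e$ splits $B=\widehat{D}(H)[\theta]$ into $A[\theta]$ (with $d\theta=0$) times a factor on which $\sigma-1$ is invertible and where $v^{-1}\theta$ contracts every DG-module; the paper phrases the same decomposition in the other direction, as the inclusion of DGAs $\widehat{D}(H)_0[\theta]\hookrightarrow B$ being a quasi-isomorphism. The chain-level full faithfulness of $\rho^{*}$ is also fine.

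The genuine gap is your second paragraph, which you yourself flag as ``the main obstacle'': you never prove that $S^{2}=\mathrm{Id}$ forces $\sigma$ to be a semisimple element of $\widehat{D}(H)$, and the justification you gesture at --- that $\sigma$ ``is in any case invertible with roots-of-unity spectrum'' and hence of finite multiplicative order --- is neither established nor obviously true. In the twisted convolution product \eqref{mult} one has $\sigma^{-1}=S^{-1}$, but $\sigma\star\sigma$ works out to $h\mapsto h^2h^1$ rather than any power of the antipode, so finiteness of the order of $S$ does not transfer to $\sigma$ in any direct way; some genuinely different input is required. The paper supplies it by a global semisimplicity argument rather than a local analysis of $\sigma$: by Larson--Radford \cite{sqid2,sqid}, $S^{2}=\mathrm{Id}$ in characteristic $0$ forces $H$ to be semisimple and cosemisimple; then the Drinfeld double $D(H)$ is semisimple \cite{dhss}; since $S=S^{-1}$ one has $D_a(H)=D(H)$ (compare Lemma~\ref{uhu}), and the module categories of $D_a(H)$ and $\widehat{D}(H)$ are equivalent by \cite{s2}, so $\widehat{D}(H)$ is semisimple; finally, every central element of a finite-dimensional semisimple algebra over an algebraically closed field acts diagonalizably by Artin--Wedderburn and Schur's lemma. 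Note that this route uses characteristic $0$ essentially (the paper says so explicitly), whereas your sketch makes no use of it. Without some such argument your proof does not close; with it, the remainder of your proof coincides with the paper's.
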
 On the other hand, if we consider Sweedler's Hopf algebra $T_2(-1)$ (the smallest case of $S^2\neq {\rm Id}$) then they do not (Proposition~\ref{notsscase}):
\begin{prop}
Let $H=T_2(-1)$, then the mixed complexes in the category of stable anti-Yetter--Drinfeld contramodules are not DG-equivalent to the category of mixed anti-Yetter--Drinfeld contramodules.
\end{prop}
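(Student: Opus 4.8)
By the reduction explained in the introduction it is enough to show that the two DGAs $A_1:=\widehat{D}(H)[\theta]$ (with $d\theta=\sigma-1$) and $A_2:=\bigl(\widehat{D}(H)/(\sigma-1)\bigr)[\theta]$ (with $d\theta=0$) are not DG-Morita equivalent, i.e.\ that their categories of DG modules are not equivalent as DG categories. Put $\nu:=\sigma-1\in\widehat{D}(H)$, a central element, and $\bar D:=\widehat{D}(H)/(\nu)$. The crux of the proof is an explicit determination, for $H=T_2(-1)$, of the finite-dimensional algebra $\widehat{D}(H)$, of the element $\sigma$, and hence of $\bar D$ together with the projection $\widehat{D}(H)\to\bar D$. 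Since $S^2\neq{\rm Id}$ one has $\nu\neq 0$; since $\bar D\neq 0$ (stable $aYD$-contramodules are not all zero) the central element $\nu$ is not invertible, hence a zero divisor, and I expect it to be nilpotent — most plausibly $\nu^2=0$ — so that $\widehat{D}(H)$ is a nonzero square-zero-type thickening $0\to\bar D\nu\to\widehat{D}(H)\to\bar D\to 0$. What singles out $T_2(-1)$, the smallest Hopf algebra with $S^2\neq{\rm Id}$, is that the $\bar D$-bimodule $\bar D\nu$ is $\bar D$ twisted on one side by the automorphism $\phi\in{\rm Aut}(\bar D)$ induced by $S^2$, and that $\phi$ is \emph{not inner}; when $S^2={\rm Id}$ one has $\nu=0$ and $A_1=A_2$, which is Proposition~\ref{sscase}.

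Granting such a description, I would distinguish $A_1$ from $A_2$ by Hochschild homology, which is a DG-Morita invariant. For $A_2=\bar D\otimes_k k[\theta]/\theta^2$ the Künneth formula gives $HH_\bullet(A_2)\cong HH_\bullet(\bar D)\otimes_k HH_\bullet\bigl(k[\theta]/\theta^2\bigr)$; the ``circle factor'' $k[\theta]/\theta^2$ (with $\theta$ in odd degree) is not smooth and satisfies $\dim_k HH_n\bigl(k[\theta]/\theta^2\bigr)=1$ for every $n\geq 0$, whence $\dim_k HH_n(A_2)=\sum_{i=0}^n\dim_k HH_i(\bar D)$. For $A_1$ I would compute $HH_\bullet(A_1)$ either directly from its two-term description or, equivalently, from the presentation $A_1\simeq\widehat{D}(H)\otimes^{\mathbb{L}}_{k[\nu]}k$ as the Koszul DGA of the central element $\nu$, via base change along $k[\nu]\hookrightarrow\widehat{D}(H)$; the answer is controlled by the Koszul homology of $\nu$ acting on $HH_\bullet(\widehat{D}(H))$. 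The two computations agree in degree $0$ (both give $\bar D/[\bar D,\bar D]$) but not in higher degrees: the discrepancy is exactly the imprint of the non-inner twist $\phi$, and it vanishes precisely when $\phi={\rm id}$, i.e.\ when $S^2={\rm Id}$. Hence $HH_\bullet(A_1)\not\cong HH_\bullet(A_2)$, so $A_1$ and $A_2$ are not DG-Morita equivalent.

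The genuinely hard ingredient is the first paragraph: identifying $\widehat{D}(H)$, $\sigma$, $\bar D$ and the automorphism $\phi$ for $T_2(-1)$ and checking that $\phi$ is outer. After that the rest is (somewhat intricate) bookkeeping with Hochschild complexes. I note that if the computation instead revealed $\sigma-1$ to be a non-zero-divisor in $\widehat{D}(H)$, everything would simplify: then $A_1$ is formal and quasi-isomorphic to $\bar D$ placed in degree $0$, $A_2=\bar D\otimes_k k[\theta]/\theta^2$, and the inequality $\dim_k HH_n(A_2)=\sum_{i=0}^n\dim_k HH_i(\bar D)>\dim_k HH_n(\bar D)=\dim_k HH_n(A_1)$ for all $n\geq 1$ — valid since $\bar D\neq 0$ forces $HH_0(\bar D)=\bar D/[\bar D,\bar D]\neq 0$ — already gives the conclusion.
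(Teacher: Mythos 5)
Your overall strategy -- distinguish the two DGAs by a Hochschild-type invariant of the DG module category -- is the same in spirit as the paper's, which uses $HH^{-1}$ (Hochschild cohomology) rather than $HH_\bullet$. But as written your argument has a genuine gap: everything that would actually prove the statement is deferred or guessed, and several of the structural guesses are wrong. The paper first computes $\widehat{D}(T_2(-1))$ explicitly (Corollary~\ref{gens}) and splits it by the central group-like $gg'$ into $D_0\oplus D_1$. On $D_0$ the element $\nu=\sigma-1$ is \emph{diagonalizable} with eigenvalues $0$ and $-2$, so there $\nu$ is not nilpotent and the two DGAs are quasi-isomorphic (Lemma~\ref{diag}); only on $D_1$ is $\nu=x'x$ square-zero. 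So your ``most plausibly $\nu^2=0$'' fails for the full algebra, and the interesting phenomenon lives entirely in one block. Worse, the claim that $\bar D\nu$ is a copy of $\bar D$ twisted by an automorphism $\phi$ induced by $S^2$ is false: $\nu$ has a large annihilator (already $x\cdot x'x=0=x'\cdot x'x$), so in the relevant block $\dim D_1=8$, $\dim D_1\nu=2$, $\dim\bar D_1=6$, and the short exact sequence $0\to\bar D\nu\to\widehat{D}(H)\to\bar D\to 0$ is not a square-zero thickening of the shape you describe. Since the concluding step of your argument (``the discrepancy is exactly the imprint of the non-inner twist $\phi$'') is an assertion resting on this incorrect description, and is in any case not computed, the proof cannot be completed along the stated lines without essentially starting over.

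For comparison, the paper's actual argument is much more economical once the presentation of $\widehat{D}(T_2(-1))$ is in hand: it reduces to the block $D_1$ (generated by $x$, $x'$, $g$ with $x^2=x'^2=g^2-1=0$, $xx'=-x'x$, $gx=-xg$, $gx'=-x'g$) and computes $HH^{-1}$ of the two two-term DGAs $C^{-1}\stackrel{d}{\to}C^0$, namely $\ker\big(C^{-1}\to C^0\oplus\mathrm{Hom}(C^0,C^{-1})\big)$ with $x\mapsto(dx,[x,-])$. This is just the set of central elements $c$ with $c\cdot(\sigma-1)=0$: for $D_1[\theta]$ the center of $D_1$ is spanned by $1$, $xx'$, $xx'g$ and the kernel condition leaves the two-dimensional span of $xx'$, $xx'g$; for $D_1/(x'x)[\theta]$ one gets the one-dimensional center. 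The mismatch $2\neq 1$ finishes the proof. If you want to salvage your Hochschild-homology route, you would still need the explicit presentation of $\widehat{D}(T_2(-1))$ and of $\sigma$ as in Corollary~\ref{gens}; at that point the paper's $HH^{-1}$ computation is strictly easier than the K\"unneth/Koszul bookkeeping you propose.
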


\noindent \textbf{Conventions.} All algebras $A$ in monoidal categories are assumed to be unital associative. Our~$H$ is a Hopf algebra over some fixed algebraically closed field $k$, of characteristic $0$, and $\vect$ denotes the category of $k$-vector spaces. For the purposes of this paper we are only interested in finite dimensional Hopf algebras. We~use the following version of Sweedler's notation: For~$h\in H$ we denote the coproduct $\Delta(h)\in H\ot H$ by $h^1\ot h^2$. The letter $S$ denotes the antipode of~$H$. The number $p$ is prime. Finally, DG stands for differential graded.

\section{Twisted Drinfeld double}
Let $H$ be a Hopf algebra. From~\cite{chern} we see that the study of the Hochschild and cyclic homologies of $\hmod$, the monoidal category of $H$-modules, reduces to the study of modules over a certain monad on $\hmod$. Recall that the consideration of Hochschild and cyclic homologies of monoidal categories is motivated by their recently discovered role~\cite{chern} in the understanding of Hopf-cyclic theory coefficients.

Briefly, we have the monad (see~\cite{chern} for more details):
\begin{gather}\label{themonad}
\mathop{\rm Hom}\nolimits_k(H,-)\colon \ \hmod\to\hmod
\end{gather}
with the $H$-module structure on $\mathop{\rm Hom}\nolimits_k(H,V)$:
\begin{gather}
x\cdot\varphi=x^2\varphi\big(S\big(x^3\big)(-)x^1\big),
\end{gather}
for $x\in H$ and $\varphi\in \mathop{\rm Hom}\nolimits_k(H,V)$. The unit $1_V\colon {\rm Id}(V)\to \mathop{\rm Hom}\nolimits_k(H,V)$ is
\begin{gather}
1_V(v)(h)=\epsilon(h)v
\end{gather}
and a crucial central element $\sigma_V\colon {\rm Id}(V)\to \mathop{\rm Hom}\nolimits_k(H,V)$ is
\begin{gather}
\sigma_V(v)(h)=hv.
\end{gather}

The anti-Yetter--Drinfeld contramodules then coincide with modules over this monad (shown in~\cite{chern}), while the stable ones consist of those for which the action of $\sigma$ agrees with that of $1$, and the mixed ones introduced in~\cite{chern} are the homotopic version of this on the nose requirement.

Recall the mixed complexes of~\cite{kassel}. These are complexes of vector spaces $(V^\bullet,d)$ with a~homo\-topy $h$ such that $dh+hd=0$. We~can replace vector spaces with $R$-modules for some ring~$R$. Note that as observed in~\cite{kassel}, the DG-category of mixed complexes in $R$-modules is isomorphic to the DG-category of DG-modules over $R[\theta]$, where $\theta$ is a freely and centrally adjoined degree~$-1$~graded commutative variable (naturally $R$ itself is placed in degree $0$, so that $R[\theta]=R\to R$ as a~comp\-lex) and $d=0$ on $R[\theta]$. The action of $\theta$ gives the homotopy $h$.

We can generalize the considerations of~\cite{kassel} so as to apply to our particular situation. Namely, let $z\in Z(R)$, i.e., $zr=rz$ for all $r\in R$. Define a DG-algebra $R[\theta]$ by placing $R$ in degree $0$ and $\theta$ in degree $-1$. Let $\theta$ commute with $R$ and itself, so in particular $\theta^2=0$. So far it is as above. Now define the differential to be $0$ on $R$ and $d\theta=z$. This is well defined and unique by the Leibniz rule. We~observe that the category of DG-modules over $R[\theta]$ consists of complexes of $R$-modules equipped with a homotopy $h$ such that $dh+hd=z$.

Now recall from~\cite{chern}:

\begin{Definition}
We say that $(M^\bullet,d,h)$ is a mixed anti-Yetter--Drinfeld contramodule if $(M^\bullet,d)$ is a complex of contramodules, i.e., modules over the monad \eqref{themonad}, and $h$ is a homotopy annihilating $\sigma-1$. More precisely,
\begin{gather}
dh+hd=\sigma-1.
\end{gather}
\end{Definition}

In this section we will define an explicit DG-algebra that will yield the mixed anti-Yetter--Drinfeld (aYD) contramodules (for $H$ finite dimensional) as its DG-modules. The construction of the twisted convolution algebra below is analogous to the classical Drinfeld double $D(H)$ and its anti-version $D_a(H)$~\cite{HKRS2} (we review these in Appendix~\ref{appx}, where we expand upon this comparison).

\begin{Definition}\label{dhhat}
Let $H$ be a Hopf algebra with an invertible antipode $S$, define a twisted double~$\widehat{D}(H)$ as follows. The multiplication on $\widehat{D}(H):={\rm End}(H)$ is
\begin{gather}\label{mult}
(f\star g)(h)= f\big(h^1\big)^2g\big(S\big(f\big(h^1\big)^3\big)h^2f\big(h^1\big)^1\big),
\end{gather}
thus the multiplicative identity, which we denote by $1$ is $\epsilon(-)1$, and the central element $\sigma(h)=h$ is invertible with inverse $S^{-1}$.
\end{Definition}

Definition~\ref{dhhat} is extracted from the monad~\eqref{themonad} with the sole purpose consisting of making the following lemma a tautology.

\begin{Lemma}\label{bas}
Let $H$ be a finite dimensional Hopf algebra.
\begin{itemize}\itemsep=0pt
\item The category of anti-Yetter--Drinfeld contramodules over $H$ is isomorphic to $\widehat{D}(H)$-mo\-dules.

\item The category of stable anti-Yetter--Drinfeld contramodules is isomorphic to modules over
\begin{gather}
A:=\widehat{D}(H)/(\sigma-1).
\end{gather}

\item The DG-category of mixed anti-Yetter--Drinfeld contramodules is isomorphic to DG-mo\-dules over the DG algebra
\begin{gather}
B:=\widehat{D}(H)[\theta],
\end{gather}
where $\theta$ is a freely adjoined degree $-1$ graded commutative variable and $d\theta=\sigma-1$, with $d|_{\widehat{D}(H)}=0$.
\end{itemize}
\end{Lemma}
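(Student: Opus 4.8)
The plan is to verify all three claims by unwinding definitions; the only real content sits in the first bullet, from which the other two follow formally together with the generalized mixed-complex discussion preceding the Definition. For the first bullet I would start from the observation that, because $H$ is finite dimensional, the map $\Phi\colon\widehat{D}(H)\ot_H V\to\mathop{\rm Hom}\nolimits_k(H,V)$, $f\ot v\mapsto(h\mapsto f(h)v)$, is an isomorphism of $H$-modules, where $\widehat{D}(H)=\mathrm{End}(H)$ is made into an $(H,H)$-bimodule through the algebra map $\iota\colon H\to\widehat{D}(H)$, $\iota(h)=\epsilon(-)h$ (that $\iota$ is a $\star$-algebra homomorphism with $\iota(1)=\epsilon(-)1=1$ the $\star$-unit is a short Sweedler computation), and the $H$-module structure on $\mathop{\rm Hom}\nolimits_k(H,V)$ is the twisted one of \eqref{themonad}; finite dimensionality is exactly what makes $\Phi$ bijective. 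Thus the endofunctor \eqref{themonad} of $\hmod$ is identified with $\widehat{D}(H)\ot_H(-)$. The heart of the argument is then to check that the multiplication \eqref{mult} on $\mathrm{End}(H)$ is precisely the one transported across $\Phi$ from the monad multiplication $\mathop{\rm Hom}\nolimits_k(H,\mathop{\rm Hom}\nolimits_k(H,-))\Rightarrow\mathop{\rm Hom}\nolimits_k(H,-)$ of \eqref{themonad} — this is the computation that Definition~\ref{dhhat} was designed to absorb — and, along the way, that $\sigma=\mathrm{id}_H$ is $\star$-central with $\star$-inverse $S^{-1}$, using the antipode identities and invertibility of $S$. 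Granting this identification of monads, a module over \eqref{themonad}, i.e.\ a pair $(V,\rho)$ with $V\in\hmod$ and $\rho\colon\mathop{\rm Hom}\nolimits_k(H,V)\to V$ an $H$-linear map satisfying the unit and associativity axioms, transports under $\Phi$ to a left $\widehat{D}(H)$-module whose underlying $H$-module structure is the restriction along $\iota$; conversely a left $\widehat{D}(H)$-module, restricted along $\iota$, carries an $H$-module structure and the remaining data reconstitute $\rho$. Since \cite{chern} identifies aYD contramodules with modules over \eqref{themonad}, this proves the first bullet.

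The second bullet is then immediate: stability is by definition the requirement that the action of $\sigma$ coincide with that of $1$, i.e.\ that $\sigma-1$ act by $0$; since $\sigma$ is central, $\widehat{D}(H)(\sigma-1)$ is a two-sided ideal, and a $\widehat{D}(H)$-module is annihilated by it exactly when it descends to $A=\widehat{D}(H)/(\sigma-1)$. For the third bullet I would invoke the generalized mixed-complex construction recalled above with $R=\widehat{D}(H)$ (an ordinary algebra, placed in degree $0$ with zero differential) and central element $z=\sigma-1\in Z(\widehat{D}(H))$: that description says DG-modules over $R[\theta]$ are complexes of $R$-modules equipped with a degree $-1$ operator $h$ (the action of $\theta$, with $h^2=0$) satisfying $dh+hd=z$, so DG-modules over $B=\widehat{D}(H)[\theta]$ are complexes of $\widehat{D}(H)$-modules with $dh+hd=\sigma-1$; by the first bullet these are complexes of contramodules with $dh+hd=\sigma-1$, which is exactly a mixed aYD contramodule in the sense of the Definition. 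One finally checks that all the identifications above are functorial and match the morphism complexes on both sides, so that one obtains an isomorphism of DG-categories.

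The main obstacle is the bookkeeping concentrated in the first bullet: proving that \eqref{mult} is associative, that $\sigma$ is $\star$-central and $\star$-invertible, and — the genuine point — that \eqref{mult} really is the transport of the monad multiplication of \eqref{themonad} across $\Phi$. These are Sweedler-notation computations involving triple coproducts and the twisted $H$-actions, which the shape of \eqref{mult} has been arranged to make routine but which still need to be carried out with care. Once that is settled, the rest of the lemma is bookkeeping of definitions and of the $R[\theta]$ formalism.
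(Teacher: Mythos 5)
Your proposal is correct, and like the paper's own proof it is ultimately an unwinding of definitions, but the mechanism you use for the first (and only substantive) bullet is genuinely different. The paper presents $\widehat{D}(H)$ as the quotient of the free product of $H^*$ and $H$ by the cross-relation \eqref{mult1}, so that a $\widehat{D}(H)$-module is simultaneously an $H$-module and an $H^*$-module (i.e., an $H$-contramodule, $H$ being finite dimensional) satisfying exactly the anti-Yetter--Drinfeld compatibility condition of~\cite{contra}; the monad \eqref{themonad} enters only through the quoted fact from~\cite{chern} that its modules are the aYD contramodules. You instead keep $\widehat{D}(H)$ whole and identify the monad \eqref{themonad} with base change $\widehat{D}(H)\ot_H(-)$ along the algebra map $\iota(h)=\epsilon(-)h$, and then invoke the standard Eilenberg--Moore fact that modules over such a monad are left modules over the algebra. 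Your route makes the role of finite dimensionality more transparent (it is precisely what makes your $\Phi$ bijective) and concentrates the one genuine computation --- that \eqref{mult} is the transport of the monad multiplication, and that $\sigma$ is central with inverse $S^{-1}$ --- in a single, clearly flagged place; the paper's route has the advantage of producing the generators-and-relations presentation \eqref{mult1} explicitly, which is then reused in Corollary~\ref{gens} to present $\widehat{D}(T_p(\xi))$. Both arguments leave comparable Sweedler computations to the reader and both rest on the same external input (aYD contramodules $=$ monad modules, from~\cite{chern}), and your handling of the second and third bullets coincides with the paper's.
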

\begin{proof}
In the finite dimensional case, as vector spaces, $\widehat{D}(H)={\rm End}(H)\simeq H^*\ot H$. Furthermore, as an algebra, $\widehat{D}(H)$ is the quotient of the free product algebra, generated by $H^*$ and $H$, by the relation:
\begin{gather}
\label{mult1}
h \chi= \chi\big(S\big(h^3\big)(-)h^1\big)h^2,
\end{gather}
where $h\in H$, $\chi\in H^*$. Thus, modules over the algebra are both $H$-modules and $H$-cont\-ra\-mo\-du\-les (same as $H^*$-modules for $H$ finite dimensional). The two actions satisfy the requisite compatibility condition for contramodules, as specified in~\cite{contra}, and ensured by \eqref{mult1}. Clearly, modules over $A$ consist of the full subcategory of objects on which $\sigma$ acts by identity, these are exactly the stable contramodules. Finally, a DG-module over $B$ is just a complex of $\widehat{D}(H)$-modules with a homotopy given by the action of $\theta$. The condition $d\theta=\sigma-1$ ensures that $dh+hd=\sigma-1$ on~$M^\bullet$.
\end{proof}

Our main goal is to compare the category of mixed
aYD contramodules to the category of~mixed complexes of stable aYD
contramodules. By the preceding lemma this means determining when,
and more interestingly when not, the category of DG-modules over $B$ is DG-equivalent to $A[\theta]$-modules (with $\theta$ of degree $-1$ and $d=0$). The study of Hopf-cyclic cohomology has thus far only concerned itself with the latter.

The following simple lemma takes care of a lot of cases.

\begin{Lemma}\label{diag}
Let $H$ be a finite dimensional Hopf algebra and suppose that the action of $\sigma-1$ on~$\widehat{D}(H)$ is diagonalizable. Then the categories of mixed complexes in stable $aYD$-contramodules and mixed $aYD$-contramodules are $DG$-equivalent.
\end{Lemma}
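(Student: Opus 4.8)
The plan is to exploit that $\sigma$ is central in the finite dimensional algebra $\widehat{D}(H)$, so that diagonalizability of its action forces a block decomposition, and then to check that every block except the ``$\sigma=1$'' one is homologically trivial. First I would decompose $\widehat{D}(H)$: since $\sigma\in Z(\widehat{D}(H))$ and $\sigma-1$ acts diagonalizably, $k[\sigma]$ is a semisimple commutative subalgebra of $Z(\widehat{D}(H))$, so there are orthogonal central idempotents $e_\lambda$ summing to $1$, indexed by the finitely many eigenvalues $\lambda$ of $\sigma$, with $\sigma e_\lambda=\lambda e_\lambda$. Writing $R_\lambda:=e_\lambda\widehat{D}(H)$ this gives a finite product decomposition $\widehat{D}(H)=\prod_\lambda R_\lambda$ of algebras on which $\sigma$ acts by the scalar $\lambda$; since $\sigma-1$ is invertible on $R_\lambda$ for $\lambda\neq1$ and zero on $R_1$, the two-sided ideal $(\sigma-1)$ equals $\bigoplus_{\lambda\neq1}R_\lambda$, whence $R_1\cong A=\widehat{D}(H)/(\sigma-1)$ (with $R_1=A=0$ if $1$ is not an eigenvalue of $\sigma$).

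Next I would lift this to the DG-algebra $B=\widehat{D}(H)[\theta]$. The $e_\lambda$ are degree-$0$ cocycles (as $d|_{\widehat{D}(H)}=0$) and stay central in $B$ (as $\theta$ commutes with $\widehat{D}(H)$), so $B=\prod_\lambda B_\lambda$ as DG-algebras, with $B_\lambda:=e_\lambda B=R_\lambda[\theta]$ and $d\theta=e_\lambda(\sigma-1)=(\lambda-1)1_{R_\lambda}$. In particular $B_1=A[\theta]$ with $d\theta=0$, which by Lemma~\ref{bas} together with the generalization of~\cite{kassel} recalled above is exactly the DG-algebra whose DG-modules are the mixed complexes in stable $aYD$-contramodules. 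Correspondingly the DG-category of DG-modules over $B$ splits as the product of the DG-categories of DG-modules over the $B_\lambda$, compatibly with $\mathop{\rm Hom}$-complexes.

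It then remains to show that for each $\lambda\neq1$ the DG-category of DG-modules over $B_\lambda$ is DG-equivalent to the zero DG-category. For DG-$B_\lambda$-modules $M,N$, the $\theta$-action $h_N$ on $N$ satisfies $dh_N+h_Nd=(\lambda-1){\rm Id}$ by the graded Leibniz rule; because $\theta^2=0$, the operator $f\mapsto\tfrac1{\lambda-1}h_N\circ f$ maps the complex $\mathop{\rm Hom}\nolimits_{B_\lambda}(M,N)$ into itself, and a short computation using that relation shows it is a contracting homotopy. Hence all these $\mathop{\rm Hom}$-complexes are acyclic, every identity morphism is null-homotopic, so the homotopy category of DG-$B_\lambda$-modules is trivial and the block is contractible. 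Combining this with the product splitting, the fully faithful DG-functor from DG-modules over $A[\theta]=B_1$ to DG-modules over $B$ (restriction of scalars along the projection $B\to B_1$) is an isomorphism on $\mathop{\rm Hom}$-complexes and essentially surjective on homotopy categories, hence a DG-equivalence; this is the asserted equivalence between the mixed $aYD$-contramodules (DG-modules over $B$) and the mixed complexes in stable $aYD$-contramodules (DG-modules over $A[\theta]$).

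I expect the last step to be the main obstacle. It does not suffice that each $B_\lambda$-module with $\lambda\neq1$ be contractible as a bare complex: what is needed is that the contracting homotopy be $B_\lambda$-linear, i.e.\ commute with the $\theta$-action, which is exactly what $\theta^2=0$ provides, and this is what upgrades ``acyclic complex'' to ``acyclic $\mathop{\rm Hom}$-complex'' and hence to a genuine DG-equivalence with the zero DG-category. Keeping the signs straight in $dh_N+h_Nd=(\lambda-1){\rm Id}$ and in the verification that $D\big(\tfrac1{\lambda-1}h_N\circ f\big)=f$, and observing that ``DG-equivalent'' must here mean quasi-equivalence rather than a strict equivalence of categories (the product $\prod_\lambda$ is certainly not strictly equivalent to its $\lambda=1$ factor), is where the real content lies.
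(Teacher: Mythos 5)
Your proof is correct, and it starts exactly where the paper's does: both arguments rest on the eigenspace decomposition of $\widehat{D}(H)$ under the central element $\sigma$ (the paper lumps all nonzero eigenvalues of $\sigma-1$ into a single factor $\widehat{D}(H)_+$, you keep the individual central blocks $R_\lambda$; this is cosmetic) and on identifying the $\lambda=1$ block with $A$. Where you genuinely diverge is in how the equivalence is extracted. The paper exhibits the inclusion of DGAs $A[\theta]\simeq\widehat{D}(H)_0[\theta]\hookrightarrow B$, checks by inspecting the two-term complexes that it is a quasi-isomorphism (using that $\sigma-1$ is invertible on $\widehat{D}(H)_+$), and then appeals implicitly to the fact that quasi-isomorphic DGAs have DG-equivalent module categories. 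You instead split the DG-category of $B$-modules as a product over the blocks and annihilate each block with $\lambda\neq1$ by hand, using $\tfrac{1}{\lambda-1}\theta$ as an explicit contracting homotopy of every ${\rm Hom}$-complex; your remark that $\theta^2=0$ is what makes $h_N\circ f$ again $B_\lambda$-linear is precisely the point that upgrades contractibility of objects to acyclicity of ${\rm Hom}$-complexes. The paper's route is shorter but leans on derived Morita theory for DGAs; yours is self-contained and makes visible exactly why the non-unit eigenvalues contribute nothing, at the cost of having to be careful (as you are) that the resulting equivalence is a quasi-equivalence rather than a strict one. You also supply explicitly the identification of the two-sided ideal $(\sigma-1)$ with $\bigoplus_{\lambda\neq1}R_\lambda$, which is the content of the paper's unproved assertion $\widehat{D}(H)_0\simeq A$.
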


\begin{proof}
Since the action of the central element $\sigma-1$ on $\widehat{D}(H)$ is diagonalizable we may decom\-pose $\widehat{D}(H)$ as a product of algebras $\widehat{D}(H)_0\oplus \widehat{D}(H)_+$ with $\widehat{D}(H)_0$ being the $0$-eigenspace and~$\widehat{D}(H)_+$ all the other eigenspaces. We~have an inclusion of DGAs: $\widehat{D}(H)_0[\theta]\to B$ that induces an isomorphism on cohomology. Namely, as complexes $B$ is $\widehat{D}(H)\stackrel{\sigma-1}{\to}\widehat{D}(H)$ whereas $\widehat{D}(H)_0[\theta]$ is $\widehat{D}(H)_0\stackrel{0}{\to}\widehat{D}(H)_0$ and $\sigma-1$ is invertible on $\widehat{D}(H)_+$. Note that $\widehat{D}(H)_0\simeq A$ and we are done.
\end{proof}

\begin{Proposition}\label{sscase}
Let $H$ be a finite dimensional Hopf algebra such that the square of the antipode is identity, i.e., $S^2={\rm Id}$. Then the categories of mixed complexes in stable $aYD$-contramodules and mixed $aYD$-contramodules are $DG$-equivalent.
\end{Proposition}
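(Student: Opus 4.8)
The plan is to reduce the statement to Lemma~\ref{diag}: it suffices to show that, when $S^2={\rm Id}$, the action of $\sigma-1$ on $\widehat{D}(H)$ (i.e.\ left multiplication by the central element $\sigma-1$) is diagonalizable, and for that it is enough to know that the algebra $\widehat{D}(H)$ is semisimple. Indeed, over the algebraically closed field $k$ a finite dimensional semisimple algebra is a product $\prod_i M_{n_i}(k)$, its centre is $\prod_i k$, and a central element therefore acts as a scalar on each matrix block of the left regular representation; so left multiplication by $\sigma$, hence by $\sigma-1$, is diagonalizable and Lemma~\ref{diag} applies verbatim.

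So the real content is: $S^2={\rm Id}$ implies $\widehat{D}(H)$ semisimple. First I would invoke the Larson--Radford theorem: since $\mathrm{char}\,k=0$, the condition $S^2={\rm Id}$ is equivalent to $H$ being simultaneously semisimple and cosemisimple, i.e.\ both $H$ and $H^*$ are semisimple algebras. Next, using the presentation of $\widehat{D}(H)$ coming from the relation~\eqref{mult1} --- equivalently, the identification (spelled out in the Appendix, following~\cite{HKRS2}) of $\widehat{D}(H)$ with the algebra underlying the Drinfeld double $D(H)$, respectively the anti-double $D_a(H)$ --- one realizes $\widehat{D}(H)$ as a crossed/smash product of the semisimple algebra $H^*$ by the semisimple and cosemisimple Hopf algebra $H$. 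A Maschke-type argument (build a normalized integral in $\widehat{D}(H)$ out of the normalized integrals of $H$ and of $H^*$) then shows that such a crossed product is again semisimple; alternatively one simply cites the classical fact that $D(H)$ is semisimple precisely when $H$ is semisimple and cosemisimple, together with the algebra isomorphism $\widehat{D}(H)\cong D(H)$ recorded in the Appendix.

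I expect the main obstacle to be purely organizational: identifying $\widehat{D}(H)$ with the correct crossed product and either running the Maschke integral argument cleanly or matching it with the Appendix's comparison with $D(H)$ and $D_a(H)$ --- nothing conceptually deep. Once semisimplicity of $\widehat{D}(H)$ is in hand the proof closes immediately via the paragraph above and Lemma~\ref{diag}. As a consistency check that the hypothesis $S^2={\rm Id}$ is genuinely used: for $H=T_2(-1)$, where $S^2\neq{\rm Id}$, the algebra $\widehat{D}(H)$ is not semisimple, $\sigma-1$ acquires a nontrivial nilpotent part, Lemma~\ref{diag} no longer applies, and indeed the two DG categories diverge, as recorded in Proposition~\ref{notsscase}.
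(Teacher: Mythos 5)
Your proposal is correct and follows essentially the same route as the paper's proof: Larson--Radford gives semisimplicity of $H$ from $S^2={\rm Id}$ in characteristic $0$, hence semisimplicity of the Drinfeld double, which transfers to $\widehat{D}(H)$, so the central element $\sigma-1$ acts diagonalizably and Lemma~\ref{diag} finishes the argument. The only slight imprecision is that the Appendix records an algebra isomorphism $D(H)\cong D_a(H)$ (Lemma~\ref{uhu}) together with an equivalence of module categories between $D_a(H)$-modules and $\widehat{D}(H)$-modules coming from~\cite{s2}, rather than a direct algebra isomorphism $\widehat{D}(H)\cong D(H)$; either way semisimplicity transfers, which is all that is needed.
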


\begin{proof}We need characteristic $0$ here. Since $S^2={\rm Id}$, so $H$ is semi-simple~\cite{sqid2, sqid}, so $D(H)$ (its Drinfeld double) is semi-simple~\cite{dhss}. By Lemma~\ref{uhu}, we know that it follows that $D_a(H)$ is semi-simple and thus by~\cite{s2} so is $\widehat{D}(H)$. Thus, by Schur's lemma, the action of the central element $\sigma-1$ is diagonalizable and we are done by Lemma~\ref{diag}.
\end{proof}

In light of the above we need to consider an example of $H$ with $S^2\neq {\rm Id}$. It turns out that the smallest, dimension wise, such example suffices.

\section{Taft Hopf algebras}\label{sec:blah}
We fix a prime $p$ and a primitive $p$th root of unity $\xi\in k$ in the following. The Taft Hopf algebra $T_p(\xi)$~\cite{taft} is generated as a $k$-algebra by $g$ and $x$ with the relations \begin{gather}
g^p=1,\qquad
x^p=0,
\\
gx=\xi xg.
\end{gather}
It is sometimes called the quantum ${\mathfrak{sl}}_2$ Borel algebra. It is $p^2$ dimensional over $k$. Furthermore, the coalgebra structure is
\begin{gather}
\Delta(g)=g\ot g,\qquad \Delta(x)=x\ot 1+g\ot x
\end{gather}
with $\epsilon(g)=1$, $\epsilon(x)=0$, and thus $S(g)=g^{-1}$, while $S(x)=-g^{-1}x$. Note that
\begin{gather}
S^2(x)=\xi^{-1}x\neq x,
\end{gather}
making $T_2(-1)$ the smallest Hopf algebra with $S^2\neq {\rm Id}$. The Taft algebra $T_2(-1)$ is somewhat different from the other $T_p(\xi)$ and has its own name: Sweedler’s Hopf algebra.

\subsection{The identification with the dual}
The Taft algebra is isomorphic to its dual. We~need some explicit formulas establishing the isomorphism $T_p(\xi)\simeq T_p(\xi)^*$ and its inverse. Suppose that $\omega$ is a $p$th root of unity, let
\begin{gather}
(n)_\omega=1+\cdots +\omega^{n-1}
\end{gather}
and
\begin{gather}
(n)_\omega !=(n)_\omega\cdots(1)_\omega.
\end{gather}

The verification of the following is left to the reader; key details can be found in~\cite{selfdual}. The lemma itself can be obtained from~\cite{nen}.
\begin{Lemma}\label{nenlem}
 As Hopf algebras
 \begin{gather}
 T_p(\xi)^*\simeq T_p(\xi).
 \end{gather}
\end{Lemma}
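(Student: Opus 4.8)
The plan is to produce an explicit Hopf algebra isomorphism $\phi\colon T_p(\xi)\to T_p(\xi)^*$, where $T_p(\xi)^*$ carries its usual Hopf structure (multiplication dual to $\Delta$, comultiplication dual to the multiplication, unit $\epsilon$, counit $\mathrm{ev}_1$). I specify $\phi$ on the generators by prescribing two functionals $G,X\in T_p(\xi)^*$ on the PBW basis $\big\{g^ix^j : 0\le i,j\le p-1\big\}$ of $T_p(\xi)$, check that $G$ and $X$ satisfy the defining relations of $T_p(\xi)$ inside $T_p(\xi)^*$ together with the correct coproducts, and then conclude by a dimension count.

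Concretely, set $G\big(g^ix^j\big)=\xi^i\delta_{j,0}$ and $X\big(g^ix^j\big)=\xi^i\delta_{j,1}$; equivalently, $G$ is the algebra character with $G(g)=\xi$, $G(x)=0$, and $X$ is the unique functional with $X(ab)=X(a)\epsilon(b)+G(a)X(b)$ normalized by $X(x)=1$. The one computational input is the iterated coproduct of $T_p(\xi)$: since $(g\ot x)(x\ot 1)=\xi\,(x\ot 1)(g\ot x)$, the quantum binomial theorem gives
\[
\Delta\big(g^ix^j\big)=\sum_{k=0}^{j}\binom{j}{k}_{\xi}\xi^{-k(j-k)}\,g^{i+j-k}x^{k}\ot g^{i}x^{j-k},
\]
where $\binom{j}{k}_{\xi}=(j)_\xi!/((k)_\xi!(j-k)_\xi!)$ is a well-defined nonzero scalar for $j\le p-1$ precisely because $\xi$ is a primitive $p$th root of unity (so $(m)_\xi\ne 0$ for $1\le m\le p-1$). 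Dualizing this formula, one checks each of the following by a one- or two-line evaluation on basis elements: $\Delta_{T_p(\xi)^*}G=G\ot G$ and $G^{p}=\epsilon$ (evaluate on $g$ and on $x$, using $\Delta^{(p-1)}(x)=\sum_{l=1}^{p}g^{\ot(l-1)}\ot x\ot 1^{\ot(p-l)}$ and $\xi^p=1$); $\Delta_{T_p(\xi)^*}X=X\ot\epsilon+G\ot X$ (a check on products $g^ix^j\cdot g^kx^l=\xi^{-jk}g^{i+k}x^{j+l}$); $GX=\xi XG$ (from $(GX)\big(g^ix\big)=\xi^{2i+1}$ and $(XG)\big(g^ix\big)=\xi^{2i}$); and $X^{p}=0$, which is forced because $\Delta$ preserves total degree in $x$ while $X$ is supported in $x$-degree exactly $1$, so $X^{p}$ would have to be supported in $x$-degree $p$, which does not occur as $j\le p-1$.

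Once these relations hold, $\phi$ extends uniquely to an algebra homomorphism with $\phi(g)=G$, $\phi(x)=X$, $\phi(1)=\epsilon$; it is a coalgebra map because $\Delta_{T_p(\xi)^*}\circ\phi$ and $(\phi\ot\phi)\circ\Delta$ are algebra maps $T_p(\xi)\to T_p(\xi)^*\ot T_p(\xi)^*$ agreeing on the generators, and $\mathrm{ev}_1\circ\phi=\epsilon$ likewise; hence $\phi$ is a bialgebra map, and therefore a Hopf algebra map (a bialgebra map between Hopf algebras automatically commutes with the antipode). Finally $\dim_k T_p(\xi)^*=p^2=\dim_k T_p(\xi)$, so it suffices to show $\phi$ is injective, i.e.\ that the $p^2$ elements $\phi\big(g^ix^j\big)=G^iX^j$ are linearly independent. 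Pairing with the basis $\big\{g^ax^b\big\}$ one sees, again by the $x$-degree bookkeeping, that $G^iX^j$ vanishes on $g^ax^b$ unless $b=j$, and for $b=j$ one computes $\big(G^iX^j\big)\big(g^ax^j\big)=C_{i,j}\,\xi^{(i+j)a}$ with $C_{i,j}\ne 0$; thus, for each fixed $j$, the relevant coefficient matrix in the variables $i,a$ is a Vandermonde matrix in the distinct nodes $\xi^a$ up to nonzero row and column scalings, hence invertible. Equivalently, and this is the route whose key details are recorded in~\cite{selfdual}, one writes the inverse of $\phi$ explicitly, the normalizing constants being the $\xi$-factorials $(n)_\xi!$.

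The main obstacle is purely bookkeeping: organizing the powers of $\xi$ and the $\xi$-binomial coefficients in the iterated coproduct and in the dual product so that the relations $G^p=\epsilon$, $X^p=0$, $GX=\xi XG$, $\Delta_{T_p(\xi)^*}X=X\ot\epsilon+G\ot X$ drop out cleanly. Conceptually nothing is delicate: $G$ and $X$ are forced (the nontrivial character of $\langle g\rangle\cong\mathbb{Z}/p$ of full order, and its associated skew derivation), the only relation that could a priori fail is $X^p=0$, and that is exactly guaranteed by $j\le p-1<p$; bijectivity is then a dimension count plus invertibility of a Vandermonde matrix, where primitivity of $\xi$ enters once more.
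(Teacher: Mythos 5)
Your proof is correct, and it takes a somewhat different route from the paper's. The paper simply exhibits the isomorphism in closed form on the whole monomial basis, $g^ix^j\mapsto(j)_{\xi^{-1}}!\sum_l\xi^{i(j+l)}\big(g^lx^j\big)^*$, together with its explicit inverse (so bijectivity is read off by composing the two formulas), and defers the verification to the references. You instead define the map only on the generators, verify the defining relations $G^p=\epsilon$, $X^p=0$, $GX=\xi XG$ and the coproducts $\Delta G=G\otimes G$, $\Delta X=X\otimes\epsilon+G\otimes X$ in the dual, and then get bijectivity from the dimension count plus a Vandermonde argument; this is a complete, self-contained generators-and-relations proof, at the cost of the $x$-degree bookkeeping and the nonvanishing of the constants $C_{i,j}$ (which is where primitivity of $\xi$ enters, exactly as in the paper's $(j)_{\xi^{-1}}!$ normalization). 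Two small points of comparison: first, your isomorphism is not literally the paper's --- on $g$ the two agree, but your $X$ sends $g^lx\mapsto\xi^l$ whereas the paper's image of $x$ is $\sum_l(g^lx)^*$, i.e., $g^lx\mapsto 1$; both are valid (they differ by composing with an automorphism/convention for the dual coproduct), and this is immaterial for the lemma as stated, but the paper's specific formula is what is fed into Corollary~\ref{gens} to compute the $\sigma$-action~\eqref{sigmaaction}, so substituting your isomorphism there would change the normalizations. Second, your closing remark correctly identifies that the explicit-inverse route is the one whose details are in~\cite{selfdual}, which is precisely the paper's choice.
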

\begin{proof}
Consider a basis of $T_p(\xi)$: $\big\{g^ix^j\big\}_{ i,j=0}^{p-1}$ so that $\big\{\big(g^ix^j\big)^*\big\}$ denotes the dual basis of $T_p(\xi)^*$. Then the isomorphism of Hopf algebras and its inverse are given by
\begin{gather}
g^ix^j\mapsto(j)_{\xi^{-1}}!\sum_l\xi^{i(j+l)}\big(g^lx^j\big)^*
\end{gather}
and
\begin{gather}
(g^ix^j)^*\mapsto\dfrac{1}{p(j)_{\xi^{-1}}!}\sum_l\xi^{-l(i+j)}g^lx^j.
\end{gather}
\end{proof}

\begin{Corollary}\label{gens}The twisted double $\widehat{D}(T_p(\xi))$ is a quotient of $k\left\langle x,x',g,g'\right\rangle.$

\begin{itemize}\itemsep=0pt
\item The relations are
\begin{gather*}
x^p=x'^p=g^p-1=g'^p-1=0,
\\
gg'=g'g,\qquad
gx=\xi xg,\qquad
g'x'=\xi x'g',\qquad
gx'=\xi^{-1} x'g, \qquad
g'x=\xi^{-1} xg',
\\
xx'-\xi^{-1}x'x=1-\xi^{-1}g'^{-1}g.
\end{gather*}

\item The actions of $g'$ and $g$ on a $\widehat{D}(T_p(\xi))$-module $V$, yields a $(\mathbb{Z}/p)^2$-grading on $V$ by their eigenspaces, i.e., $g'$, $g$ act on $V_{ij}$ by $\xi^i$, $\xi^j$, respectively. Thus $x$ and $x'$ have degrees $(-1,1)$ and $(1,-1)$, respectively.

\item The $S^1$-action of $\sigma$ on $V_{ij}$ is
\begin{gather}\label{sigmaaction}
\sum_{l=0}^{p-1}\dfrac{\xi^{(i-l)(j+l)}}{(l)_{\xi^{-1}}!}x'^lx^l.
\end{gather}
\end{itemize}
\end{Corollary}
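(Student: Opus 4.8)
The plan is to obtain everything by transporting the defining relation \eqref{mult1} of $\widehat{D}(H)$ through the explicit self-duality isomorphism of Lemma~\ref{nenlem}. Concretely, write $A = T_p(\xi)$ with generators $g,x$ as in the section, and let $A^* = T_p(\xi)^*$. By the proof of Lemma~\ref{bas}, $\widehat{D}(A)$ is the free product $A * A^*$ modulo the straightening relation \eqref{mult1}, namely $h\chi = \chi(S(h^3)(-)h^1)h^2$ for $h\in A$, $\chi\in A^*$. I would first \emph{rename} the copy of $A$ coming from $A^*$ via the inverse isomorphism $(g^ix^j)^* \mapsto \tfrac{1}{p(j)_{\xi^{-1}!}}\sum_l \xi^{-l(i+j)} g^l x^j$ of Lemma~\ref{nenlem}, calling the images $g'$ and $x'$; then $\widehat{D}(A)$ is presented by $x,x',g,g'$ and it remains only to rewrite the three families of relations --- the relations internal to $A$, the relations internal to $A^*$ (which become the stated relations $g'^p=1$, $x'^p=0$, $g'x'=\xi x'g'$ once pushed through the Hopf isomorphism), and the cross relations \eqref{mult1} --- in terms of these four generators.

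The second step is the computation of the cross relations. It suffices to evaluate \eqref{mult1} for $h\in\{g,x\}$ against $\chi$ dual-basis elements and then translate. For $h=g$: $\Delta g = g\otimes g$, $S(g)=g^{-1}$, so $g\chi = \chi(g^{-1}(-)g)\,g$; applying this with $\chi$ running over the images of $g',x'$ and using $\Delta(g^i x^j)$ to see how conjugation by $g$ scales each dual-basis vector, one reads off $gg' = g'g$, $gx' = \xi^{-1} x' g$ (the sign of the exponent being forced by $x'$ having weight opposite to $x$ under $g$-conjugation, consistent with the claimed bidegree $(1,-1)$). For $h=x$: here $\Delta x = x\otimes 1 + g\otimes x$ has two terms, so \eqref{mult1} reads $x\chi = \chi(S(x)(-)1 + S(g)(-)x)\text{(appropriate second legs)}$, i.e.\ $x\chi = \chi((-)1)\cdot(\cdots) + \chi(g^{-1}(-)x)\cdot(\cdots)$ after inserting $S(x)=-g^{-1}x$; chasing this through gives $g'x = \xi^{-1} x g'$ together with the one genuinely non-monomial relation. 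That last relation, after collecting terms, is exactly $xx' - \xi^{-1} x'x = 1 - \xi^{-1} g'^{-1} g$: the constant $1$ comes from the $\epsilon$-part (the term where the dual pairing lands on the counit) and the $g'^{-1}g$ term records the $g\otimes x$ summand of $\Delta x$. I would present this computation only for $p$ general but carry the root-of-unity bookkeeping carefully, since the normalizing factors $(j)_{\xi^{-1}}!$ in Lemma~\ref{nenlem} are what make the coefficient come out as exactly $\xi^{-1}$ rather than some other power.

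For the grading statement: since $g,g'$ are grouplike-type (invertible, order $p$, central to each other, and acting semisimply because $g^p=1$ with $p$ invertible in $k$ and $\xi$ a primitive $p$th root of unity), simultaneous diagonalization gives the $(\mathbb{Z}/p)^2$-grading $V = \bigoplus V_{ij}$ with $g'|_{V_{ij}} = \xi^i$, $g|_{V_{ij}} = \xi^j$; the commutation relations $gx=\xi xg$ etc.\ translate directly into $x\colon V_{ij}\to V_{i-1,j+1}$ and $x'\colon V_{ij}\to V_{i+1,j-1}$, i.e.\ the bidegrees $(-1,1)$ and $(1,-1)$. Finally, the formula \eqref{sigmaaction} for the $S^1$-action is nothing but the image of $\sigma = \mathrm{id}_H \in \mathrm{End}(H) = \widehat{D}(H)$, written as an element of $A\otimes A^* \cong \widehat{D}(H)$: under Lemma~\ref{nenlem}, $\mathrm{id}_H = \sum_{i,j} g^i x^j \otimes (g^i x^j)^*$ (the canonical element), and substituting the isomorphism $g^i x^j \mapsto (j)_{\xi^{-1}}!\sum_l \xi^{i(j+l)}(g^l x^j)^*$ for the second tensor factor, then re-expressing $(g^l x^j)^*$ back in the primed generators, collapses the $i$-sum (it is a geometric sum in $\xi^i$ which is nonzero only for matching weights, producing the $\xi^{(i-l)(j+l)}$ numerator on $V_{ij}$) and leaves $\sum_{l} \tfrac{\xi^{(i-l)(j+l)}}{(l)_{\xi^{-1}}!} x'^l x^l$. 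The main obstacle I anticipate is purely bookkeeping: keeping the two intertwined root-of-unity factorials and the direction of all the $\xi^{\pm1}$ exponents consistent across the three translations (Sweedler's notation for $\Delta x$, the self-duality isomorphism, and the re-expansion of the canonical element), so I would fix conventions once at the start and verify the final relations against the known case $p=2$, $\xi=-1$ as a sanity check.
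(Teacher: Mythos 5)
Your proposal follows essentially the same route as the paper: identify $\widehat{D}(T_p(\xi))$ with $T_p(\xi)\otimes T_p(\xi)$ via Lemma~\ref{bas} and the self-duality of Lemma~\ref{nenlem}, derive the cross relations by applying \eqref{mult1} to $h=g$ and $h=x$, and compute $\sigma$ as the canonical element $\sum_{ij}(g^ix^j)^*\otimes g^ix^j$ directly on graded components. The only caveat is that your schematic expansion of \eqref{mult1} for $h=x$ misplaces the legs (the iterated coproduct $x\otimes 1\otimes 1+g\otimes x\otimes 1+g\otimes g\otimes x$ gives three terms, with $S$ applied to the \emph{third} leg), but since you defer that bookkeeping and land on the correct final relation, this is a presentational slip rather than a gap.
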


\begin{proof}
We use the identification of vector spaces $\widehat{D}(T_p(\xi))=(T_p(\xi))^*\ot T_p(\xi)$ as in Lemma~\ref{bas} followed by $(T_p(\xi))^*\ot T_p(\xi) \simeq T_p(\xi)\ot T_p(\xi)$ from Lemma~\ref{nenlem}. We~let
\begin{gather}
x'=x\ot 1, \qquad
x=1\ot x, \qquad
g'=g\ot 1, \qquad
g=1\ot g
\end{gather}
in the latter. To derive the rest of the relations we apply \eqref{mult1}. The action of $\sigma=\sum_{ij}\big(g^ix^j\big)^*\ot g^ix^j$ is computed on the graded components directly.
\end{proof}

Observe that it follows from Corollary~\ref{gens} that $gg'\in\widehat{D}(T_p(\xi))$ is central. Since we see that $(gg')^p=1$ so its action on $\widehat{D}(T_p(\xi))$ is diagonalizable with eigenvalues $\xi^s$, $s\in\zp$. Thus as an~algebra
\begin{gather}\label{split}
\widehat{D}(T_p(\xi))=\bigoplus_s \widehat{D}(T_p(\xi))/(gg'-\xi^s)
\end{gather}
so that it suffices to understand $\widehat{D}(T_p(\xi))/(gg'-\xi^s)$. There are two cases: $p=2$ and $p>2$. We~will begin by briefly discussing the latter (though without addressing the $S^1$-action), and then concentrate our attention on the former (with examining the $S^1$-action) to achieve the goal set out in the abstract.

\subsection[The case of p>2]{The case of $\boldsymbol{p>2}$}
Let $p>2$, then there exists a primitive $p$th root of unity $q\in k$ such that
\begin{gather}
q^2=\xi^{-1}.
\end{gather}
We then have
\begin{gather}\label{sbys}
\widehat{D}(T_p(\xi))/(gg'-\xi^s)\simeq u_q({\mathfrak{sl}}_2).
\end{gather}
More precisely, let
\begin{gather}
E=\frac{q^{s+1}}{q-q^{-1}}x', \qquad
F=xg',\qquad\text{and}\qquad
K=q^{s+1}g,
\end{gather}
so that $\widehat{D}(T_p(\xi))/(gg'-\xi^s)$ is generated by $E$, $F$, $K$ subject to
\begin{gather*}
E^p=F^p=K^p-1=0,
\\
[E,F]=\frac{K-K^{-1}}{q-q^{-1}},\qquad
KEK^{-1}=q^2E,\qquad \text{and}\qquad KFK^{-1}=q^{-2}F.
\end{gather*}
This shows that:
\begin{gather}\label{justasindh}
\widehat{D}(T_p(\xi))\simeq u_q({\mathfrak{sl}}_2)\ot\mathcal{O}_{\zp}\simeq u_q({\mathfrak{sl}}_2)\ot k\zp.
\end{gather}
As we will see below the case of $p=2$ is very different, in particular as $s$ varies, the algebra will change significantly whereas here it does not \eqref{sbys}.

\begin{Remark}\label{DD}
See~\cite{uqsl2}, where the Taft algebra is called the quantum ${\mathfrak{sl}}_2$ Borel algebra, and its Drinfeld double is computed. The result obtained is identical to ours in~\eqref{justasindh}, though we compute the twisted double. This is not surprising as we see from Appendix~\ref{appx} that our analysis of $\widehat{D}(H)$ can be interpreted as that of~$D(H)$, with $\sigma$ being a new ingredient. Note that more generally, a comparison between Drinfeld doubles of Nichols algebras and quantized universal enveloping algebras can be found in~\cite{extra}.
\end{Remark}

\subsection[The case of p=2]{The case of $\boldsymbol{p=2}$}
We need to describe the algebra $\widehat{D}(T_2(-1))$ in greater detail, paying particular attention to the element $\sigma$.

By \eqref{split} the category of DG-modules over $\widehat{D}(T_2(-1))[\theta]$ is a product of categories, $\Cc_0\times \Cc_1$, corresponding to the cases $s=0$ and $s=1$. We~will deal with both separately. More precisely, for a $\widehat{D}(T_2(-1))[\theta]$-module $V$, we decompose
\begin{gather}
V=(V_{00}\oplus V_{11})\oplus(V_{01}\oplus V_{10}).
\end{gather}

Observe that by the Corollary~\ref{gens} we have that
\begin{gather}
xx'+x'x=1+(-1)^s,
\end{gather}
so that
\begin{gather}\label{dd}
(x'x)^2=(1+(-1)^s)x'x.
\end{gather}
We see from \eqref{dd} that the minimal polynomial of $x'x$ depends only on $s$; this is exclusive to $p=2$ and makes this case tractable. Note that by \eqref{sigmaaction}:
\begin{gather}\label{sig}
\sigma|_{V_{00}}=1-x'x,\qquad \sigma|_{V_{11}}=-1+x'x,\qquad\text{and}\qquad \sigma|_{V_{01}}=\sigma|_{V_{10}}=1+x'x.
\end{gather} Let
\begin{gather}
D_s=\widehat{D}(T_2(-1))/(gg'-(-1)^s).
\end{gather}

We begin with $s=0$: The category of $D_0$-modules consists of $\mathbb{Z}/2$-graded vector spaces ($V=V_{00}\oplus V_{11}$) equipped with degree changing operators $x$ and $x'$ subject to the relation $xx'+x'x=2$. The action of $\sigma-1$ on $V_{00}$ is $-x'x$ and on $V_{11}$ it is $x'x-2$.

\begin{Lemma}
The category $\Cc_0$ consists of the mixed complexes of~{\rm \cite{kassel}}.
\end{Lemma}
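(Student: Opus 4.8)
The plan is to exhibit an explicit DG-equivalence between $\Cc_0$ (DG-modules over $D_0[\theta]$) and the DG-category of mixed complexes of $k$-vector spaces, i.e., DG-modules over $k[\theta']$ with $d\theta'=0$. The key observation is that on the $s=0$ summand the central element $\sigma-1$ is not merely diagonalizable but \emph{nilpotent}: by \eqref{dd} with $s=0$ we have $(x'x)^2 = 2x'x$, so on $V_{00}$ the operator $\sigma-1 = -x'x$ satisfies $(\sigma-1)^2 = -2(\sigma-1)$, hence $(\sigma-1)(\sigma-1+2) = 0$; a symmetric computation on $V_{11}$ gives $(\sigma-1)(\sigma-1-2)=0$. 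Since we are in characteristic $0$ the $2$ is invertible, so $\sigma-1$ is in fact diagonalizable on $D_0$ with eigenvalues $0$ and $\pm 2$. Thus Lemma~\ref{diag} already applies and reduces $\Cc_0$ to DG-modules over $A_0[\theta]$ where $A_0 = D_0/(\sigma-1)$ is the $0$-eigenspace algebra and $d=0$.

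So the real content is to identify $A_0$, the algebra of stable aYD-contramodules in the $s=0$ block, with $k$ itself (equivalently, to show its module category is $\vect$). Concretely: a $D_0$-module on which $\sigma=1$ must have $x'x$ acting as $0$ on $V_{00}$ (since $\sigma-1=-x'x$ there) and $x'x$ acting as $2$ on $V_{11}$. Combined with $xx'+x'x=2$, on $V_{00}$ this forces $xx'=2$ there too, and on $V_{11}$ it forces $xx'=0$. I would check that the conditions $x'x=0$ on $V_{00}$ and $xx'=0$ on $V_{11}$ (with $x\colon V_{00}\to V_{11}$, $x'\colon V_{11}\to V_{00}$) together with the invertibility of $\tfrac12 x x'\colon V_{00}\to V_{00}$ and $\tfrac12 x' x\colon V_{11}\to V_{11}$ — wait, one must be careful: $xx'$ on $V_{00}$ sends $V_{00}\to V_{11}\to V_{00}$, and $x'x$ on $V_{00}$ sends $V_{00}\to\cdots$; the point is that one of the two composites is $0$ and the other is $2\cdot\mathrm{id}$, and $x$, $x'$ restrict to mutually inverse isomorphisms (up to the scalar $2$) between $V_{00}$ and $V_{11}$. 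Hence $V$ is determined by the single vector space $V_{00}$, giving an equivalence $A_0\text{-mod}\simeq\vect$ which one upgrades to an isomorphism of algebras $A_0\cong \mathrm{End}_k(k) = k$ (or at worst a Morita equivalence, which suffices).

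With that identification in hand the chain of equivalences is: $\Cc_0 = D_0[\theta]\text{-mod} \simeq A_0[\theta]\text{-mod}$ (by Lemma~\ref{diag}, using the characteristic-$0$ diagonalizability established above) $\simeq k[\theta]\text{-mod}$ (by the identification $A_0\simeq k$, noting this respects the DG structure since $d=0$ on both sides and $\theta$ is adjoined freely and centrally in the same way), and $k[\theta]\text{-mod}$ with $d\theta=0$ is exactly the DG-category of mixed complexes of vector spaces by the recollection of~\cite{kassel} quoted in the excerpt. I would then simply assemble these into the statement.

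I expect the main obstacle to be purely bookkeeping rather than conceptual: namely being careful about which of the two bigraded composites $xx'$ and $x'x$ vanishes on which summand, and confirming that the resulting identification $A_0\simeq k$ is genuinely an algebra isomorphism (so that $A_0[\theta]\simeq k[\theta]$ as DGAs and not merely Morita-equivalent) — or, if only Morita equivalence is available, checking that it nonetheless induces a DG-equivalence of the $\theta$-module categories, which it does because adjoining a central degree $-1$ square-zero variable with $d\theta = z$ is functorial under Morita equivalence when $z$ is central. Everything else is either immediate from Corollary~\ref{gens} and equations \eqref{dd}, \eqref{sig}, or already packaged in Lemma~\ref{diag}.
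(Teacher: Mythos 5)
Your proposal follows essentially the same route as the paper: diagonalizability of $\sigma-1$ on $D_0$ via \eqref{dd} together with Lemma~\ref{diag} to reduce to $D_0/(\sigma-1)[\theta]$-modules, and then the identification of $D_0/(\sigma-1)$-modules with $\vect$ (the paper just performs these two steps in the opposite order). A few small repairs for the write-up: $D_0/(\sigma-1)$ is ${\rm End}_k\big(k^2\big)=M_2(k)$ rather than $k$, so it is the Morita-equivalence fallback you mention that actually applies (and it does suffice); to conclude that $V$ is determined by $V_{00}$ alone you must additionally invoke $x^2=x'^2=0$, which forces the remaining components $x|_{V_{00}}$ and $x'|_{V_{11}}$ to vanish once the other two components are seen to be invertible; and $\sigma-1$ is diagonalizable rather than nilpotent, satisfying $(\sigma-1)(\sigma-1+2)=0$ on $V_{11}$ as well (the nonzero eigenvalue is $-2$ in both degrees), though none of these slips affects the structure of the argument.
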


 \begin{proof}
Note that $D_0/(\sigma-1)$-modules are just vector spaces. Indeed, let $y=x'/2$. For improved clarity, denote by $x_i$ and $y_i$ the action of $x$ and $y$, respectively, that originates at $V_{ii}$. After modding out by $\sigma-1$ we have by \eqref{sig} that $y_1x_0=0\implies x_1y_0=1$ and $y_0x_1=1$. So
\begin{gather}
y_0:V_{00}\simeq V_{11}: x_1.
\end{gather}
Furthermore, $x^2=y^2=0$ so that both $x_0$ and $y_1$ are the $0$ maps. Thus
\begin{gather}
V=V_{00}\oplus V_{11}\mapsto V_{00}
\end{gather}
establishes an equivalence of categories between $D_0/(\sigma-1)$-modules and $\vect$.

The action of $\sigma-1$ on $D_0$ is diagonalizable by \eqref{dd} and so by the proof of Lemma~\ref{diag} the algebras $D_0[\theta]$ ($d\theta=\sigma-1$) and $D_0/(\sigma-1)[\theta]$ ($d\theta=0$) are quasi-isomorphic. Thus, $\Cc_0$, being DG-equivalent to $D_0/(\sigma-1)[\theta]$-modules, is by the above discussion, equivalent to $k[\theta]$-modules. These are just the mixed complexes of~\cite{kassel}.
\end{proof}

 \begin{Remark}
 Note that not only does $\Cc_0$ consist of the usual mixed complexes but it also does not provide any evidence of the need for the mixed $aYD$-contramodules (see the proof above).
 \end{Remark}

Moving on to $s=1$ we find that things change for the better. Recall that $D_1$ is generated by
\begin{gather}
x,\ x',\ g
\end{gather}
subject to
\begin{gather}
x^2=x'^2=g^2-1=0, \qquad
xx'=-x'x, \qquad
gx=-xg, \qquad
gx'=-x'g.
\end{gather}
Furthermore, $\sigma-1$ acts as $x'x$ by \eqref{sig}.

\begin{Proposition}\label{notsscase}
Let $H=T_2(-1)$, then the mixed complexes in the category of stable anti-Yetter--Drinfeld contramodules are not DG-equivalent to the category of mixed anti-Yetter--Drinfeld contramodules.
\end{Proposition}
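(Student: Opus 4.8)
The plan is to cut the problem down to one block and then separate that block by a derived (Morita) invariant.

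\emph{Reduction to the $s=1$ block.} By the splitting after \eqref{split}, the mixed $aYD$-contramodules for $T_2(-1)$ form $\Cc_0\times\Cc_1$; carrying out the construction of the previous section with $\widehat D(T_2(-1))/(\sigma-1)$ in place of $\widehat D(T_2(-1))$, the mixed complexes in stable $aYD$-contramodules form $\Cc_0'\times\Cc_1'$, where $\Cc_s'$ is the $DG$-category of $DG$-modules over $(D_s/(\sigma-1))[\theta]$ with $d\theta=0$. The lemma identifying $\Cc_0$ with the mixed complexes of~\cite{kassel} gives $\Cc_0\simeq\Cc_0'$. The centre of $D_1$ is $\langle 1,x'x,x'xg\rangle$, with $x'x$ and $x'xg$ nilpotent, so $D_1[\theta]$ is a connected $DG$-algebra, and likewise $A_1[\theta]$, where $A_1:=D_1/(\sigma-1)=D_1/(x'x)$; hence each of the four factors is indecomposable, the two product decompositions are unique up to reordering and equivalence, and since $\Cc_0$ has one simple object in its heart while $\Cc_1'$ has two, a $DG$-equivalence of the two big categories would force $\Cc_1\simeq\Cc_1'$. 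So it suffices to show that the $DG$-modules over $B_1:=D_1[\theta]$ (with $d\theta=x'x$) are not $DG$-equivalent to the $DG$-modules over $A_1[\theta]$ (with $d\theta=0$).

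\emph{Passage to an invariant, and the structural difference.} A $DG$-equivalence between the $DG$-categories of $DG$-modules over two $DG$-algebras is a $DG$-Morita equivalence, hence induces a triangle equivalence of derived categories and isomorphisms of Hochschild (co)homology, $K$-theory, cyclic homology, etc.; so it is enough to exhibit one such invariant that differs. The seed of the difference is visible in the cohomology algebras. Since $x'x\in Z(D_1)$ and $(x'x)^2=0$, the complex underlying $B_1$ is $D_1\stackrel{x'x}{\to}D_1$ in degrees $-1,0$, so $H^0(B_1)=A_1$ and $H^{-1}(B_1)=\mathrm{Ann}_{D_1}(x'x)$, which a short computation with Corollary~\ref{gens} identifies with $\mathrm{rad}(D_1)$ ($6$-dimensional); as $\theta^2=0$ the product on $H^{-1}(B_1)$ is zero, so $H^\bullet(B_1)=A_1\ltimes\mathrm{rad}(D_1)[1]$, whereas $H^\bullet(A_1[\theta])=A_1[\theta]=A_1\ltimes A_1[1]$. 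These are non-isomorphic graded algebras because $\mathrm{rad}(D_1)$ is not cyclic over $A_1$: its top $\mathrm{rad}(D_1)/\mathrm{rad}(A_1)\mathrm{rad}(D_1)$ is free of rank $2$ over $A_1/\mathrm{rad}(A_1)\cong k\times k$, while $A_1$ is cyclic over itself.

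\emph{Upgrading to a genuine Morita invariant.} Since $H^\bullet$ of a $DG$-algebra is not itself Morita-invariant, this non-isomorphism must be promoted. One route is to compute Hochschild cohomology, using the Künneth decomposition $HH^\bullet(A_1[\theta])\cong HH^\bullet(A_1)\otimes_k HH^\bullet(k[\theta]/\theta^2)$ and checking that $HH^\bullet(B_1)$ admits no such factorization — morally because $B_1$ is the Koszul complex of $D_1$ on the \emph{nilpotent} central element $x'x$ rather than on $0$. A cleaner route: $B_1$ and $A_1[\theta]$ are connective with $H^0=A_1$, so their derived categories carry canonical $t$-structures with heart the $A_1$-modules, and one compares the graded $\mathrm{Ext}$-algebras of the simple $A_1$-modules $k_\pm$; over $A_1[\theta]\cong A_1\otimes_k k[\theta]/\theta^2$ one has $\mathrm{Ext}^\bullet_{A_1[\theta]}(k_\pm,k_\pm)\cong\mathrm{Ext}^\bullet_{A_1}(k_\pm,k_\pm)\otimes k[t]$ with a non-nilpotent central periodicity operator $t$ in degree $2$, whereas a resolution of $k_\pm$ over $B_1$ — where, $x'x$ being a nilpotent central element rather than $0$, this periodicity is disrupted — produces an $\mathrm{Ext}$-algebra differing from it already in low degrees. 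On the second route one must also check that the equivalence can be normalized to be $t$-exact; on either route the real work is the explicit homological computation over $B_1$, a finite but fiddly calculation over an $8$-dimensional algebra with one adjoined degree $-1$ variable, and this is the step I expect to be the main obstacle.
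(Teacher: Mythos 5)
There is a genuine gap: you correctly identify Hochschild cohomology of the DG algebra as the right DG Morita invariant, but you never actually compute it, and you say explicitly that the decisive homological computation over $B_1=D_1[\theta]$ is ``the main obstacle.'' Both of your proposed routes are left as sketches: for the first you would have to compute $HH^\bullet(B_1)$ and then prove a non-factorization statement, and for the second you would have to both resolve the simples over $B_1$ and justify that an equivalence can be normalized to be $t$-exact. The intermediate observation that $H^\bullet(B_1)\cong A_1\ltimes \mathrm{rad}(D_1)[1]$ differs from $H^\bullet(A_1[\theta])=A_1\ltimes A_1[1]$ is correct but, as you yourself note, not an invariant of the DG module category, so as written the argument does not close.

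What you are missing is that only one Hochschild group is needed, and in degree $-1$ the computation is essentially trivial. For a DG algebra $C=C^{-1}\stackrel{d}{\to}C^0$ concentrated in two degrees, the Hochschild cochain complex has nothing below total degree $-1$, so
\begin{gather*}
HH^{-1}(C)=\ker\big(C^{-1}\stackrel{\alpha}{\to} C^0\oplus \mathrm{Hom}\big(C^0,C^{-1}\big)\big),\qquad \alpha(a)=(da,[a,-]),
\end{gather*}
i.e., the central degree $-1$ cocycles. For $B_1$ this is $\{a\theta\colon a\in Z(D_1),\ a\,x'x=0\}$; since $Z(D_1)=\langle 1,\,xx',\,xx'g\rangle$ and $xx'\cdot x'x=xx'g\cdot x'x=0$ while $1\cdot x'x\neq 0$, it is $2$-dimensional. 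For $A_1[\theta]$ with $d\theta=0$ it is $Z(A_1)\theta=\langle\theta\rangle$, which is $1$-dimensional. This single dimension count (which is exactly the paper's argument) replaces both of your unfinished routes; the centers of $D_1$ and $D_1/(x'x)$ are the only inputs, and you already have all the relations needed to verify them from Corollary~\ref{gens}. Your reduction to the $s=1$ block is workable but also heavier than necessary: $HH^{-1}$ is additive over the product decomposition \eqref{split}, so the quasi-isomorphism $\Cc_0\simeq\Cc_0'$ plus the computation above distinguishes the total categories without any appeal to uniqueness of product decompositions.
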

\begin{proof}
By the preceding discussion, i.e., the decomposition of the category into a product, it~suf\-fices to show that the categories $D_1[\theta]\text{-mod}$ (where $d\theta=x'x$) and $D_1/(x'x)[\theta]\text{-mod}$ (where $d\theta =0$) are not DG-equivalent.

Recall that the Hochschild cohomology $HH^i(C^\bullet)$ of a DG algebra $C^\bullet$ is an invariant of its DG category of modules~\cite{dginvar}. We~will compute $HH^{-1}$. In our simple case of a DG algebra $C=C^{-1}\stackrel{d}{\to} C^0$ concentrated in two degrees, we have
\begin{gather}
HH^{-1}(C)={\rm ker} \big(C^{-1}\stackrel{\alpha}{\to} C^0\oplus {\rm Hom}\big(C^0, C^{-1}\big)\big),
\end{gather}
where $\alpha(x)=(dx, [x,-])$.

The key observation here is that the center of $D_1$ is spanned by $1$, $xx'$, $xx'g$, while that of~$D_1/(x'x)$ is spanned by $1$. Thus, in the first case we get that $HH^{-1}$ is spanned by $xx'$ and~$xx'g$. In the second case it is spanned by $1$.
\end{proof}

\appendix

\section{Appendix}\label{appx}

Our purpose in this section is to compare $\widehat{D}(H)$ of Definition~\ref{dhhat} to the more familiar Drinfeld double $D(H)$ in the case of a finite dimensional Hopf algebra $H$. Though not original, see~\cite{quantumgroups} for Definition~\ref{yd} and~\cite{HKRS2} for Definition~\ref{ayd}, since our conventions differ from the usual ones we spell out the definitions again below (for $H$ finite dimensional):

\begin{Definition}\label{yd}
The algebra $D(H)$ is generated by $H$ and $H^*$ subject to the relations:
\begin{gather}
\chi h=h^2\chi\big(h^3(-)S^{-1}\big(h^1\big)\big),
\end{gather}
for $\chi\in H^*$ and $h\in H$. Thus $D(H)=H\ot H^*$ as vector spaces.
\end{Definition}

\begin{Definition}\label{ayd}
The algebra $D_a(H)$ is generated by $H$ and $H^*$ subject to the relations:
\begin{gather}
\chi h=h^2\chi\big(h^3(-)S\big(h^1\big)\big),
\end{gather}
for $\chi\in H^*$ and $h\in H$. Thus $D_a(H)=H\ot H^*$ as vector spaces. The central element is $\sigma=e_i\ot e^i$, where $e_i$ is any basis of $H$ and $e^i$ is its dual basis of $H^*$.
\end{Definition}

Note that modules over $D_a(H)$ as specified in Definition~\ref{ayd} can be identified with what is usually called left-right anti-Yetter--Drinfeld modules, i.e., left modules and right como\-du\-les~\cite{HKRS2}. Recall that if $H$ is finite dimensional then we have an $S^1$-equivariant equivalence between $\widehat{D}(H)$-modules and $D_a(H)$-modules~\cite{s2}. We~will thus focus on the comparison between $D_a(H)$ and~$D(H)$. It is known that in general they give very different categories of~modules~\cite{gentaft}. It is immediate that if $S^2={\rm Id}$ then the algebras in fact coincide, since the only difference bet\-ween them is $S$ in Definition~\ref{ayd} and $S^{-1}$ in Definition~\ref{yd}. Below we extend that observation slightly so as to cover our case of Taft algebras where we do not have $S^2={\rm Id}$, but instead we~get
\begin{gather}
S^2(h)=uhu^{-1}
\end{gather}
for some group-like element $u\in H$, i.e., $\Delta(u)=u\ot u$. For $T_p(\xi)$ we have $S^2(a)=g^{-1}ag$ with $\Delta g^{-1}=g^{-1} \ot g^{-1}$, so that $u=g^{-1}$.

Recall that Hopf algebras possessing such an element $u$ as above are called pivotal, see~\cite{pivot, spherical} for example. Their categories of representations are thus pivotal as well, i.e., equipped with a monoidal isomorphism from the identity functor to the double dual functor. This natural transformation is given by the action of $u\in H$; it is monoidal since $u$ is group-like and mapping to the double dual since $S^2(h)=uhu^{-1}$.

The following lemma is a straightforward computation but can be obtained from~\cite{gentaft}:

\begin{Lemma}\label{uhu}
Let $H$ be a finite dimensional Hopf algebra and suppose that there exists a $u\in H$ with $\Delta(u)=u\ot u$ such that $S^2(h)=uhu^{-1}$ for all $h\in H$. Then
\begin{align*}
D(H)&\to D_a(H),
\\
h\ot \chi&\mapsto h\ot \chi((-)u)
\end{align*} is an isomorphism of algebras.
\end{Lemma}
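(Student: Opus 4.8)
The plan is to show directly that the map in the statement, which I will call $\Phi$, is a bijective algebra homomorphism. Structurally $\Phi$ leaves the $H$-tensor factor of $D(H)=H\ot H^*$ untouched and acts on the $H^*$-factor by the right translation $R_u\colon\chi\mapsto\chi((-)u)$. First I would record that $\Phi$ is a linear bijection of the common underlying space $H\ot H^*$: since $u$ is group-like in a finite dimensional Hopf algebra it is invertible (with $u^{-1}=S(u)$), and $R_u$ is then invertible with inverse $R_{u^{-1}}$. Because both $D(H)$ and $D_a(H)$ are generated by the subspaces $H$ and $H^*$, it suffices to check that $\Phi$ is multiplicative on each of these two generating subalgebras and that it carries the straightening relation of Definition~\ref{yd} to that of Definition~\ref{ayd}; the products $h\chi=h\ot\chi$ are already in normal form and are respected automatically since $\Phi(h)\Phi(\chi)=h\ot\chi((-)u)=\Phi(h\ot\chi)$.

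On $H$ the map is the identity: as $u$ is group-like, $\epsilon(u)=1$, so $R_u(\epsilon)=\epsilon$ and $\Phi(h\ot\epsilon)=h\ot\epsilon$. On $H^*$ the multiplication is the convolution product dual to $\Delta$, and using $\Delta(u)=u\ot u$ one computes, for $a\in H$,
\[
R_u(\chi\psi)(a)=(\chi\psi)(au)=\chi\big(a^1u\big)\psi\big(a^2u\big)=\big(R_u(\chi)R_u(\psi)\big)(a),
\]
so $\Phi|_{H^*}$ is an algebra homomorphism. Thus $\Phi$ restricts to algebra maps on both generating subalgebras, and what remains is the compatibility with the cross relations.

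This last point is the crux, and it is exactly where the pivotal hypothesis enters. Substituting $h=S^{-1}(x)$ into $S^2(h)=uhu^{-1}$ gives $S(x)=uS^{-1}(x)u^{-1}$, i.e., the identity $uS^{-1}(x)=S(x)u$ valid for all $x\in H$. Now I apply $\Phi$ to the relation $\chi h=h^2\ot\chi\big(h^3(-)S^{-1}\big(h^1\big)\big)$ of $D(H)$: inserting $u$ after the argument slot produces the functional $a\mapsto\chi\big(h^3a\,u\,S^{-1}\big(h^1\big)\big)$. On the other hand, evaluating $\Phi(\chi)\Phi(h)=\chi((-)u)\,h$ inside $D_a(H)$ via Definition~\ref{ayd} gives $h^2\ot\big(a\mapsto\chi\big(h^3a\,S\big(h^1\big)u\big)\big)$. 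The two functionals coincide because $uS^{-1}(h^1)=S(h^1)u$ holds for each Sweedler leg by the identity above, and the $H$-legs $h^2$ agree, so $\Phi(\chi h)=\Phi(\chi)\Phi(h)$. This Sweedler-index comparison is the only genuinely nontrivial step and the main thing to carry out carefully; combined with the bijectivity from the first paragraph it shows that $\Phi$ is an isomorphism of algebras.
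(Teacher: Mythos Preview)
Your argument is correct and is exactly the ``straightforward computation'' the paper alludes to (the paper gives no proof beyond that phrase and a reference to~\cite{gentaft}). The three checks you perform---$R_u$ is an algebra automorphism of $H^*$ because $u$ is group-like, the map is the identity on $H$, and the straightening relation is preserved via the identity $uS^{-1}(x)=S(x)u$ obtained from $S^2=u(-)u^{-1}$---are precisely the content of the computation, and bijectivity from invertibility of $u$ finishes it.
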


Thus for Taft algebras, Drinfeld doubles can play the role of $\widehat{D}(H)$, as long as we are careful to remember about the crucial central element $\sigma$.

\subsection*{Acknowledgements}

This research was supported in part by the NSERC Discovery Grant number 406709. The author wishes to thank the referees for their many helpful suggestions.

\pdfbookmark[1]{References}{ref}
\LastPageEnding

\end{document}